\documentclass[11pt,a4paper]{article}
\pdfoutput=1

\usepackage[T1]{fontenc}
\usepackage[utf8]{inputenc}
\usepackage{amsmath,amssymb,amsthm,mathtools}
\usepackage{microtype}
\usepackage[hidelinks]{hyperref}
\usepackage[top=26mm,bottom=27mm,left=30mm,right=30mm,includefoot]{geometry}

\allowdisplaybreaks[1]

\newtheorem{theorem}{Theorem}[section]
\newtheorem{proposition}[theorem]{Proposition}
\newtheorem{lemma}[theorem]{Lemma}

\newcommand{\Z}{\mathbb Z}
\newcommand{\Q}{\mathbb Q}
\newcommand{\Pcal}{\mathcal P}
\newcommand{\card}[1]{\lvert #1\rvert}

\title{Improved Bounds for Distinct Multiples in Intervals}

\author{
Kaizhe Chen\thanks{Department of Mathematics, Princeton University, Princeton, USA. Email: \texttt{kc8556@princeton.edu}.} \qquad
Samuel Korsky\thanks{Independent researcher. Email: \texttt{samkorsky11@gmail.com}.}
}
\date{August 4, 2026}

\begin{document}

\maketitle

\begin{abstract}
For $n\ge1$, let $F(n)$ be the least $H$ such that any $H$ consecutive integers contain $n$ pairwise distinct integers $a_1, a_2, \dots, a_n$ with $k \mid a_k$ for $1\le k\le n$, and define $h_{\mathbb P}(n)$ analogously for the primes at most $n$. We prove
\[
 F(n)\le n^{4/3}\exp\!\left(O\!\left(\frac{\log n}{\log\log n}\right)\right),
 \qquad
 h_{\mathbb P}(n)\ll \frac{n^{4/3}}{(\log n)^{1/3}},
\]
and
\[
 F(n) \ge
 h_{\mathbb P}(n)\ge
 n\exp\!\left(
 \left(\frac{\log 2}{2}-o(1)\right)
 \frac{\log n}{\log\log n}
 \right).
\]
The upper bounds follow from a new estimate for unions of arithmetic progressions. The lower bound adapts a quadratic-residue compression construction of Green and Ruzsa.
\end{abstract}

\section{Introduction}

For any integer $m$ and positive integer $H$, write $I(m,H):=\{m+1,\dots, m+H\}$.
For $n\ge1$, let $F(n)$ be the least positive integer $H$ such that, for every $m\in\Z$, the interval $I(m,H)$ contains distinct integers $a_1,\dots,a_n$ satisfying $i\mid a_i$ for $1\le i\le n$. Define $h_{\mathbb P}(n)$ analogously, with the moduli restricted to the primes at most $n$. Clearly, $F(n)\ge h_{\mathbb P}(n)$.

For an interval $I\subset\Z$ and a finite set $S$ of positive integers, put
\[
 \Gamma_I(S):=\bigcup_{d\in S}(I\cap d\Z)
 =\{x\in I:d\mid x\text{ for some }d\in S\}.
\]
Hall's marriage theorem \cite{Hall1935} says that $I$ contains distinct multiples of all members of a finite set $\mathcal D$ if and only if $\card{\Gamma_I(S)}\ge\card S$ for every $S\subseteq\mathcal D$.
Thus, an upper bound on $F(n)$ (or $h_{\mathbb P}(n)$) requires a uniform lower bound for $\card{\Gamma_I(S)}$, while a lower bound follows from an interval $I$ and a subset $S$ for which $\card{\Gamma_I(S)}<\card S$.

In 1980, Erd\H{o}s and Pomerance \cite{ErdosPomerance1980} proved $F(n)\ll n^{3/2}$ and $h_{\mathbb P}(n)\ll n^{3/2}/\sqrt{\log n}$, and conjectured that $F(n)=n^{1+o(1)}$. Erd\H{o}s \cite{Erdos1992} later offered 1000 rupees for a proof of this conjecture. Erd\H{o}s and Selfridge proved $h_{\mathbb P}(n)>(3-o(1))n$; see \cite[Problem~B32]{Guy}. In 1995, Ruzsa \cite{Ruzsa1995} showed that $h_{\mathbb P}(n)/n\to\infty$. Recently, answering a related problem of Erd\H{o}s and Pomerance \cite{ErdosPomerance1980}, van Doorn \cite{vanDoorn2026} proved $F(n)\gg n\log n/\log\log n$. More recently, Kominers \cite{Kominers2026} proved $F(n)\gg n\log n$ and further asked whether $F(n)\ll n\log n$.

In this note, our first result improves the upper bound on $F(n)$ to exponent $4/3$, up to a subpolynomial factor. The main new input is a local estimate for unions of arithmetic progressions, proved in Section~2.

\begin{theorem}\label{thm:main-upper}
We have
\[
 F(n)\le n^{4/3}\exp\!\left(O\!\left(\frac{\log n}{\log\log n}\right)\right).
\]
\end{theorem}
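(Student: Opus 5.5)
We reduce Theorem~\ref{thm:main-upper}, via Hall's theorem, to a lower bound on $\card{\Gamma_I(S)}$. Fix $H=n^{4/3}\exp(C\log n/\log\log n)$ with $C$ large, an interval $I=I(m,H)$, and a nonempty $S\subseteq\{1,\dots,n\}$. We must show $\card{\Gamma_I(S)}\ge\card S$. Put $s=\card S$.

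\textbf{Easy regimes.} If $S$ contains some $d\le H/s$, then $I\cap d\Z$ alone has at least $\lfloor H/d\rfloor\ge s$ elements, and we are done. We may therefore assume every $d\in S$ exceeds $H/s\ge H/n=n^{1/3}e^{C\log n/\log\log n}$. Each $d$ then contributes between $H/d-1$ and $H/d+1$ multiples, so $\sum_{d\in S}\card{I\cap d\Z}\ge H\sum_{d\in S}1/d - s$, which is at least $Hs/n-s$.

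\textbf{Counting collisions.} The difficulty is overlap: some $x\in I$ are multiples of many $d\in S$. I would bound the number of pairs $(d,x)$ with $d\mid x$ counted with multiplicity, using $\card{\Gamma_I(S)}\ge (\sum_d\card{I\cap d\Z})^2/\sum_{x\in I}r(x)^2$, where $r(x)=\#\{d\in S:d\mid x\}$ (Cauchy--Schwarz). Then
\[
\sum_x r(x)^2=\sum_{d,d'\in S}\card{I\cap\operatorname{lcm}(d,d')\Z}\le\sum_{d,d'}\Bigl(\frac{H\gcd(d,d')}{dd'}+1_{\operatorname{lcm}(d,d')\le H}\Bigr).
\]
The main term $H\sum\gcd(d,d')/(dd')$ is controlled by grouping by $g=\gcd$. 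With $d=ga$ and $d'=gb$, we get $\sum_g \frac{H}{g}(\sum_{a:\,ga\in S}1/a)^2$. Using $\sum_{a\le n/g}1/a\ll\log n$ and that divisor-type sums over $g$ lose only $\exp(O(\log n/\log\log n))$ (via $\tau(x)\le \exp(O(\log x/\log\log x))$), this term should be fine.

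\textbf{Main obstacle.} The real obstacle is the error term: the number of pairs with $\operatorname{lcm}(d,d')\le H$, i.e.\ $dd'/\gcd\le H$. For $d,d'\approx n$ this forces $\gcd\ge n^2/H=n^{2/3}$, so $d,d'$ are multiples of a common large $g$ with small cofactors $a,b\le H/g$. I would count these pairs by $g\ge n^2/H$, obtaining $\sum_g \min(s,n/g)^2$. This must be compared with $s^2(H/n)^2\cdot$(main term) scales, and the balance $H^3\approx n^4$ is exactly where the exponent $4/3$ should emerge.

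Plausibly this is the "local estimate for unions of arithmetic progressions" of Section~2. Roughly, for a set of moduli in a dyadic range $[D,2D]$, the union in an interval of length $H$ has size $\gg\min(s,\,H\cdot(\text{density}))$ up to $n^{o(1)}$ losses, once $H\ge D^{4/3}n^{o(1)}$. I would first dyadically decompose $S$, losing only a $\log n$ factor absorbed in the exponential, and pick the dyadic block $S_D\subseteq[D,2D]$ with $\card{S_D}\ge s/\log n$. The divisor-bound losses $\exp(O(\log n/\log\log n))$ are absorbed by the constant $C$ in $H$. If the pure second-moment bound falls short in the middle range $D\approx n$, $s\approx n$, I would refine it by restricting to elements $x$ with $r(x)\le\tau$-bounded multiplicity.
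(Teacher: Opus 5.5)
There is a genuine gap. The step that is supposed to produce the exponent $4/3$ is only asserted (``the balance $H^3\approx n^4$ is exactly where the exponent should emerge''), and the second-moment route you set up cannot produce it.

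\textbf{The error-term bound is false.} The inequality $\card{I\cap\operatorname{lcm}(d,d')\Z}\le H\gcd(d,d')/(dd')+1_{\operatorname{lcm}(d,d')\le H}$ fails: when $\operatorname{lcm}(d,d')>H$ the interval may still contain one common multiple. Since $m$ is arbitrary, this can happen for all pairs at once. Choose $m+1$ divisible by $\operatorname{lcm}(S)$; then $r(m+1)=s$ and $\sum_x r(x)^2\ge s^2$. Were your inequality true, the estimates you list would in fact give $F(n)\le n^{1+o(1)}$, because each $d$ has at most $\tau(d)H/d$ partners $d'$ with $\operatorname{lcm}(d,d')\le H$. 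That is the open Erd\H{o}s--Pomerance conjecture, a sign that something is off.

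\textbf{The corrected bound only gives exponent $3/2$.} With the correct bound $\lceil H/\operatorname{lcm}(d,d')\rceil$, the off-diagonal term can be of order $s^2$. For $S\subseteq[n/2,n]$ with $s\asymp n$, Cauchy--Schwarz then yields only $\card{\Gamma_I(S)}\gtrsim (Hs/n)^2/s^2=(H/n)^2$, which forces $H\gg n^{3/2}$. This is precisely the Erd\H{o}s--Pomerance argument.

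Your fallback of discarding points of large multiplicity does not help. The only information you are using is that each pair with $\operatorname{lcm}>H$ has at most one common multiple in $I$. That bounds the incidences at points with $r(x)>T$ by essentially $O(s^2/T)$, and optimizing $T$ again gives $(H/n)^2$. This is a real barrier, not a loss in the bookkeeping. Projective-plane incidence structures (moduli as points, elements of $I$ as lines, about $H/n\approx\sqrt s$ lines through each point) satisfy every such pairwise constraint while having only about $s$ lines. So no argument built on pair counts alone can get below exponent $3/2$.

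\textbf{The missing idea} is an estimate that uses the progression structure at the level of three points, which is the content of Lemma~\ref{lem:local-cube-root}.
\begin{itemize}
\item Take $L$ consecutive multiples $S_d$ of each $d$ in a dyadic block, and split each by parity of the index, so that the midpoint of any two points in the same class lies in $S_d$.
\item Count oriented paths $S_{d,\varepsilon}-u-S_{e,\delta}-w$ in the incidence graph. If $\card U\le\card{\mathcal D}$, there are $\gg\card{\mathcal D}L^3$ of them.
\item Send a path to $(d,x,w)$ with $x=(u+w)/2\in U$. This map is at most $\kappa$-to-one, because $e\mid w-u\ne0$, and it satisfies $w\equiv 2x-a_d\pmod d$.
\item Cauchy--Schwarz over residues then bounds the path count by $\ll\kappa\sum_{d}\sum_{c}m_d(c)^2\ll\kappa^2\card{\mathcal D}^2$, giving $L^3\ll\kappa^2\card{\mathcal D}$ with $\kappa\le\Delta_n$.
\end{itemize}
The paper then gives each dyadic block $\mathcal D_k$ its own disjoint subinterval of length $2^kL_k$, with $L_k\asymp\Delta_n^{2/3}\card{\mathcal D_k}^{1/3}$, and applies Hall inside each block separately. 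This yields $H\ll\Delta_n^{2/3}n^{4/3}$. It also avoids your ``heaviest dyadic block'' step, where a bound of the form $\card{\Gamma_I(S_D)}\ge\card{S_D}$ would only give $s/\log n$ rather than $s$.
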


For prime moduli, the same local estimate gives a logarithmic saving.

\begin{theorem}\label{thm:main-prime-upper}
We have
\[
 h_{\mathbb P}(n)\ll \frac{n^{4/3}}{(\log n)^{1/3}}.
\]
\end{theorem}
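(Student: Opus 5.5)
The plan is to verify Hall's condition from the introduction directly. Put $H:=Cn^{4/3}/(\log n)^{1/3}$ with $C$ a large absolute constant. Fix an interval $I=I(m,H)$ and a nonempty set $S$ of primes at most $n$. We must show $\card{\Gamma_I(S)}\ge\card S$. The argument rests on a form of the local estimate of Section~2 that I have not seen stated, so everything below is conditional on the assumed shape described in the next paragraph.

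\emph{Step 1: removing small moduli.} Since $\card S\le\pi(n)\le 2n/\log n$, any $p\in S$ with $p\le H\log n/(2n)$ already gives $\card{\Gamma_I(S)}\ge\lfloor H/p\rfloor\ge \pi(n)\ge\card S$. So we may assume every prime of $S$ exceeds $y:=H\log n/(2n)$, which is of size roughly $n^{1/3}(\log n)^{2/3}$.

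Next, split $S$ dyadically as $S_D:=S\cap[D,2D)$. Each $p\in S_D$ has at least $H/(2D)$ multiples in $I$. Let $r(x)$ denote the number of $p\in S_D$ dividing $x$. Then the Cauchy--Schwarz bound
\[
\card{\Gamma_I(S_D)}\ge\frac{\bigl(\sum_x r(x)\bigr)^2}{\sum_x r(x)^2}
\]
reduces the problem to controlling collisions, i.e.\ triples $(p,q,x)$ with $p\ne q$ and $pq\mid x$.

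\emph{Step 2: the local estimate (assumed form).} The inclusion--exclusion pair count only reaches the Erd\H{o}s--Pomerance bound $\card{\Gamma_I(S_D)}\gg\min(sH/D,\,H^2/D^2)$, where $s=\card{S_D}$. I expect the Section~2 estimate to improve this to something of the form
\[
\card{\Gamma_I(S_D)}\gg \min\Bigl(\frac{sH}{D},\ \Bigl(\frac{sH^3}{D^3}\Bigr)^{1/2}\Bigr)
\]
for $s$ moduli in $[D,2D)$ with $D\le n<H<D^2$, and I would invoke it in this shape. The guess is calibrated by the target: in the worst case $s\asymp D/\log D$, the second term is at least $s$ exactly when $H^3\ge sD^3$, i.e.\ $H\gg D^{4/3}/(\log D)^{1/3}$. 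This matches the claimed exponent and the $(\log n)^{1/3}$ saving, the latter coming precisely from the density $D/\log D$ of primes. The first term exceeds $s$ as soon as $H\ge 2D$.

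\emph{Step 3: summing over dyadic blocks without losing a logarithm.} The naive route uses $\card{\Gamma_I(S)}\ge\max_D\card{\Gamma_I(S_D)}$ against $\card S=\sum_D s_D$, which costs a factor $\log n$. To avoid this, I would apply the local estimate to all of $S\cap(y,n]$ at once. The moduli can be weighted by $1/p$: then $\sum_x r(x)\ge H\sum_{p\in S}1/p$, and the collision count $\sum_{p\ne q}\#\{x\in I:pq\mid x\}$ is bounded by $\sum_{p\ne q}(H/pq+1)$ restricted to pairs that actually collide. I would try to redo the Section~2 argument with $D$ replaced by the maximal scale $n$, since that scale dominates both $\card S$ and the needed lower bound. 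Alternatively, note that blocks with $D\le n/\log^2 n$ contain only $O(n/\log^3 n)$ primes in total, which are absorbed by the contribution of the top block.

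\emph{Main obstacle.} Step 3 is the hard part: getting the local estimate to handle mixed scales so that the final bound has no spurious $\log$ factor. Step 1 (small primes) and the single-scale verification are routine once Section~2 is available. Provided the mixed-scale version holds, taking $C$ large makes $\card{\Gamma_I(S)}\ge\card S$ for every $S$, and Hall's theorem gives $h_{\mathbb P}(n)\le H\ll n^{4/3}/(\log n)^{1/3}$.
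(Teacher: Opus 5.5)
There is a genuine gap, and it is where you put it. Step~3 is left as an unproven ``mixed-scale'' version of the local estimate, and your fallbacks do not close it.

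\begin{itemize}
\item The claim that primes below $n/\log^2 n$ are ``absorbed by the top block'' fails. $S$ is an arbitrary set of primes and need not contain any prime near $n$.
\item Even for $S\subseteq(n/\log^2 n,n]$ there are still $\asymp\log\log n$ dyadic scales. So $\card{\Gamma_I(S)}\ge\max_D\card{\Gamma_I(S_D)}$, combined with a bound of the form $\card{\Gamma_I(S_D)}\ge s_D$, loses a factor $\log\log n$.
\item The $1/p$-weighted count is only a suggestion.
\item Step~2 does not match what is available. Applied to the whole interval, your hypothesis $H<D^2$ excludes all scales $D\le\sqrt H$ (roughly $n^{2/3}$).
\item Lemma~\ref{lem:local-cube-root} is a Hall-threshold statement: if $\card U\le\card{\mathcal D}$ then $L^3\le C\kappa^2\card{\mathcal D}$. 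Your guessed bound $(sH^3/D^3)^{1/2}$ gives the same threshold at $L^3\asymp s$, but it claims more than the lemma or its proof gives once $L^3\gg s$.
\end{itemize}

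The missing idea is to separate scales by splitting the interval, not the union. Take $X_j=2^{-j}n$ and let $\Pcal_j$ be the primes in $(X_j/2,X_j]$. The paper cuts $I$ into disjoint consecutive pieces $I_j$ of length $H_j=\lceil X_jL_j\rceil$, with $L_j=2\lceil A\card{\Pcal_j}^{1/3}\rceil$. It matches the primes of $\Pcal_j$ only inside $I_j$; the boundedly many primes $p\le 2X_\ast$ each get their own block of length $p$.

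Each $I_j$ contains $L_j$ consecutive multiples of every $p\in\Pcal_j$. Also $H_j\le 2(A+1)X_j^{4/3}+1<X_j^2/4<pq$ for distinct $p,q\in\Pcal_j$, so $\kappa=1$. Since $L_j^3\ge 8A^3\card{\Pcal_j}>C\card{\Pcal_j}$, the lemma rules out $\card U\le\card S$ for every nonempty $S\subseteq\Pcal_j$. Hall's condition therefore holds in each piece, and the matchings combine because the pieces are disjoint.

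The total length is
\[
\sum_j X_jL_j\ll\pi_{\mathbb P}(n)^{1/3}\sum_jX_j\ll n\,\pi_{\mathbb P}(n)^{1/3}.
\]
The scales cost only a geometric series, and the saving $(\log n)^{-1/3}$ comes simply from $\pi_{\mathbb P}(n)^{1/3}$ replacing $n^{1/3}$.

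The same device would also rescue your whole-interval framework. Run the threshold lemma on disjoint sub-blocks of length $\asymp D s_D^{1/3}$; this gives $\card{\Gamma_I(S_D)}\gg s_D^{2/3}H/D\gg C\,s_D(n/D)^{4/3}$ for $D\ge y$. Since $\sum_D(D/n)^{4/3}=O(1)$, the maximum over scales then dominates $\card S$ once $C$ is large.
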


Our second result gives the following lower bounds on $F(n)$ and
$h_{\mathbb P}(n)$, which answer Kominers's question whether $F(n)\ll n\log n$ in the negative. The proof adapts the quadratic-residue compression construction of Green and Ruzsa \cite{GreenRuzsa2019}.

\begin{theorem}\label{thm:main-lower}
As $n\to\infty$,
\[
 F(n)\ge h_{\mathbb P}(n)\ge
 n\exp\!\left(
 \left(\frac{\log 2}{2}-o(1)\right)
 \frac{\log n}{\log\log n}
 \right).
\]
\end{theorem}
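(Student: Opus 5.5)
The plan is to use Hall's criterion in the easy direction. It suffices to exhibit, for each large $n$, an integer $m$, a length $H=n\exp((\frac{\log2}{2}-o(1))\frac{\log n}{\log\log n})$, and a set $S$ of primes at most $n$ with $\card{\Gamma_{I(m,H)}(S)}<\card S$. Then $h_{\mathbb P}(n)>H$, and $F(n)\ge h_{\mathbb P}(n)$ is already noted in the introduction.

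\emph{Choice of $S$.} I will take $S$ to be the primes in a top range $(n/K,n]$, with $K$ a small power of $\log n$, so that $\card S\asymp n/\log n$. Every $p\in S$ satisfies $p>n/K$. Hence every element of $\Gamma_I(S)$ has the form $pu$ with $p\in S$ and $u$ in a short interval $U$ of length about $HK/n$ centred at $m/p$. Roughly, $u\approx m/p$ ranges over a window of size $\ll HK/n$ for each $p$.

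\emph{The compression.} A naive count would make $\card{\Gamma_I(S)}$ about $\sum_{p\in S}(H/p+1)$, which is far larger than $\card S$. We therefore need $m$ chosen so that the sets $I\cap p\Z$ for different $p$ have few \emph{distinct} elements in total. Equivalently, the number of pairs $(p,u)$ with $pu\in I$ that yield distinct products must be small. This is where I will follow the Green--Ruzsa quadratic-residue compression.
\begin{itemize}
\item Fix $k\approx(1-o(1))\log n/\log\log n$ small primes $q_1,\dots,q_k$ of size about $\log n$, and choose $m$ by the Chinese remainder theorem modulo $Q=\prod q_j\le n^{1/2+o(1)}$.
\item For each $j$, the condition ``$pu\in I$'' forces $u\equiv m\bar p\pmod{q_j}$. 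The choice of $m$ ties the admissible $u$ to the quadratic character of $p$ modulo each $q_j$, so that for each $j$ only about half of the residue classes of $u$ modulo $q_j$ can occur.
\item Over the $k$ primes, the set of achievable cofactors, and hence of achievable elements of $I$, shrinks by a factor of about $2^{k}$. We lose a square root in balancing the range of $u$ against $Q$, because both $H/n$ and $Q$ must be accommodated. This leaves a net gain of $2^{k/2}=\exp((\frac{\log2}{2}-o(1))\log n/\log\log n)$, which permits $H$ to exceed $n$ by exactly this factor while keeping $\card{\Gamma_I(S)}<\card S$.
\end{itemize}

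\emph{Counting.} I will bound $\card{\Gamma_I(S)}$ by the number of integers in $I$ that are divisible by some $p\in S$ and lie in the compressed residue pattern modulo $Q$. For this I use Brun--Titchmarsh or the prime number theorem in progressions modulo $q_j$ to control how the primes of $S$ distribute among quadratic residue and nonresidue classes. I then compare with $\card S$ via the prime number theorem. The parameters $K$, $k$ and the sizes of the $q_j$ will be tuned at the end, with $K$ and the error terms contributing only the $o(1)$ in the exponent.

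\emph{Main obstacle.} The hard step is the compression itself. I need to set up the CRT choice of $m$ so that the residue constraints from all $k$ moduli interact multiplicatively, giving a true $2^{-k}$ saving, rather than merely being consistent for each $q_j$ separately. I also need to verify that the square-root loss is the only loss. Here I would first try to mirror Green--Ruzsa directly: encode the condition that $u$ lies in a prescribed half of the residues modulo each $q_j$ as a product set modulo $Q$, and count lattice points of $U\times S$ landing in it. The requirement $Q\le (H/n)^2$, up to $n^{o(1)}$ factors, should then fix $k$ and produce the constant $\frac{\log 2}{2}$.
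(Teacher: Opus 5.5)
There is a genuine gap: the compression step, which you rightly flag as the heart of the matter, is set up in a way that cannot work. The set $\{x-m : x\in\Gamma_{I(m,H)}(S)\}$ is determined by the residues $m \bmod p$ for $p\in S$, because the first multiple of $p$ in $I(m,H)$ sits at offset $(-m)\bmod p$. So $\card{\Gamma_{I(m,H)}(S)}$ depends only on $m$ modulo $\prod_{p\in S}p$. Since $Q=\prod q_j$ is coprime to every $p\in S$, imposing congruences on $m$ modulo $Q$ has no effect at all. Relatedly, $pu\in I$ is only a size condition on $u$ and forces no congruence $u\equiv m\bar p\pmod{q_j}$; the quadratic character of $p$ plays no role.

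The missing idea is to apply the Chinese remainder theorem over the large primes. Let $b_p\in[1,Q)$ represent $p^2$ modulo $Q$, and choose $m\equiv -b_p\pmod p$ for every $p\in S$. Since $Q<p$, the multiples of $p$ in $I(m,H)$ are then exactly $m+b_p+jp$ for $0\le j<J_p$, with $J_p\approx H/p$. For each fixed $j$, $b_p+jp\equiv (p+j/2)^2-j^2/4\pmod{q_i}$ for each $i$. Hence these offsets lie in at most $\prod_i(q_i+1)/2=Q\delta$ classes modulo $Q$, whatever the primes $p$ are. The saving is in the positions $x-m$, where multiples of different primes are forced to collide, not in the cofactors $u$. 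No Brun--Titchmarsh or distribution of primes among quadratic residues is needed.

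Your parameter heuristics are also inconsistent, and they misidentify where the $\tfrac12$ comes from. With $k\approx\log n/\log\log n$ primes $q_j\asymp\log n$, one gets $Q=(\log n)^{(1+o(1))k}=n^{1-o(1)}$, not $n^{1/2+o(1)}$. The constraint $Q\le (H/n)^2$ then fails badly, since your target gives $(H/n)^2\approx 2^k=n^{o(1)}$. In the correct argument, $Q$ only needs to be small compared with the spread of the primes. The paper takes $S$ in a prime-rich window $[v,v+D)\subset[n/2,n)$ with $D=n^{1-o(1)}$ and needs only $Q=o(D)$, which permits $r=(1-\eta)\log n/\log\log n$.

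The $\tfrac12$ comes from the growth of the diameter of $T_j=\{b_p+jp\}$, which is roughly $Q+jD$. This gives $\card{T_j}\le(2Q+jD)\delta$, so the union over $j<K$ has size about $K^2D\delta$. Comparing with $\card S\ge D/(2\log n)$ forces $K\approx(\delta\log n)^{-1/2}=2^{(1/2+o(1))r}$, and then $H\approx Kn$. Your choice of $S$ as all primes in $(n/K',n]$, with $K'$ a power of $\log n$, would be harmless here; it costs only polylogarithmic factors in $K$. But without the CRT over $S$ there is no compression to exploit.
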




The rest of the paper is organized as follows. In Section~2, we prove the local progression-union estimate. In Sections~3 and 4, we apply it to prove the upper bounds for $F(n)$ and $h_{\mathbb P}(n)$, respectively. In Section~5, we prove the lower bound. In the appendix, we give an independent upper-bound argument based on the Katz--Tao projection estimate.

For $x\ge2$, let $\pi_{\mathbb P}(x):=\card{\{p\le x:p\text{ prime}\}}$.
We will use the prime number theorem and Mertens' estimate in the following forms
\begin{equation}\label{eq:standard-prime-estimates}
 \pi_{\mathbb P}(x)\sim\frac{x}{\log x},
 \qquad
 \sum_{p\le x}\log p\sim x,
 \qquad
 q_r\sim r\log r,
 \qquad
 \prod_{p\le x}\left(1+\frac1p\right)\ll\log x,
\end{equation}
where $q_r$ denotes the $r$th odd prime; see, for example, \cite{MontgomeryVaughan2007}.

\section{A local progression-union estimate}

The problem of bounding unions of arithmetic progressions with pairwise distinct common differences was studied systematically by Gilboa and Pinchasi~\cite{GilboaPinchasi2014}. In the range relevant to our application, our method yields the estimate below, which gives a stronger exponent.

\begin{lemma}\label{lem:local-cube-root}
Let $\mathcal D$ be a nonempty set of distinct positive integers. For each $d\in\mathcal D$, let
\[
 S_d=\{a_d+jd:0\le j<L\}
\]
be an arithmetic progression of the same even length $L\ge4$. Suppose that all the sets $S_d$ lie in one integer interval $J$ of length $H$. 
Let $U$ be the union of the sets $S_d$ over all $d\in\mathcal D$,
and define
\[
 \kappa:=\max_{0< t\le H}
 \card{\{d\in\mathcal D:d\mid t\}}.
\]
If $|U|\le |\mathcal D|$, then $L^3\le C\kappa^2{|\mathcal D|}$,
where $C$ is an absolute constant.
\end{lemma}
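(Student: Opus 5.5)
The plan is a double-counting argument on coincidences between progressions, with $\kappa$ controlling how often a difference can be shared. Put $N:=\card{\mathcal D}$ and, for $x\in U$, let $m(x):=\card{\{d\in\mathcal D:x\in S_d\}}$. Since $\sum_{x\in U}m(x)=NL$ and $\card U\le N$, Cauchy--Schwarz gives
\[
 \mathcal T:=\card{\{(d,j,d',j'):\ a_d+jd=a_{d'}+j'd'\}}=\sum_{x\in U}m(x)^2\ \ge\ \frac{(NL)^2}{\card U}\ \ge\ NL^2 .
\]
The aim is to show that $\mathcal T$ is at most about $\kappa^{2}N^{?}L^{-1}$ times lower-order factors, so that comparing gives $L^3\le C\kappa^2N$.

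First I will record the easy ``one-dimensional'' bound to fix normalisations. For $0\le i<i'<L$ and $x=a_d+id$, the pair $(x,x+(i'-i)d)$ lies in $U^2$ with positive difference $t\le H$. For a fixed pair of points, the number of $d$ that can produce it is at most the number of $d\mid t$, which is at most $\kappa$. Hence $N\binom L2\le\kappa\card U^2/2$, i.e.\ $L^2\ll\kappa N$. This is the trivial bound, and the extra factor of $L$ has to come from exploiting the second index. To get it, I will lift each $S_d$ to the ``line'' $\ell_d=\{(j,a_d+jd):0\le j<L\}$ in $[0,L)\times J$. These lines have pairwise distinct slopes, so two of them meet in at most one lifted point. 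A coincidence in $\mathcal T$ with $d\ne d'$ and $j\ne j'$ is then a genuinely non-incident pair of lifted points with the same projection.

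For fixed $d\ne d'$, the solutions $(j,j')$ of $a_d+jd=a_{d'}+j'd'$ form a progression with step $(d'/g,d/g)$, where $g=\gcd(d,d')$. So the pair $\{d,d'\}$ contributes at most $1+Lg/\max(d,d')$ coincidences. This is where the hypothesis that $L$ is even enters. I will split each progression into its two halves $S_d^{\pm}$ of length $L/2$ and restrict to coincidences between a point of $S_d^-$ and a point of $S_{d'}^+$ (or a symmetric variant); at least a constant fraction of $\mathcal T$ survives. For such a coincidence, the point $x$ together with the far endpoints $x\pm(L/2)d$ and $x\pm(L/2)d'$ gives a configuration of three points of $U$ whose two differences are multiples of $d$ and $d'$ of size between $(L/2)d$ and $Ld$. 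Fixing the pair of differences $(t,t')$ leaves at most $\kappa$ choices for $d\mid t$ and at most $\kappa$ for $d'\mid t'$, which is the source of $\kappa^2$. The number of admissible triples of points is bounded using $\card U\le N$ and the restriction that the multiplier lies in $[L/2,L)$, which saves a factor of $L$ relative to arbitrary pairs. Assembling these pieces should give $\mathcal T\ll\kappa^2N^2/L$, and combined with $\mathcal T\ge NL^2$ this yields $L^3\ll\kappa^2N$.

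The main obstacle is precisely the last step: bounding the number of triples $(x,y,z)\in U^3$ with $y-x=id$, $z-x=i'd'$ by $N^2/L$ rather than the trivial $N^2$. I would first try to obtain this saving from the distinct-slopes (one-intersection) property of the lifted lines, via a Cauchy--Schwarz over pairs of lines. If that fails, the fallback is to invoke a Katz--Tao type projection estimate for the lifted set $\bigcup_d\ell_d$, whose projection to the second coordinate is $U$.
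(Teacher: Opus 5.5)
There is a genuine gap. Your reduction is sound, but the estimate that carries the whole lemma is never proved. The lower bound $\sum_x m(x)^2\ge NL^2$ and the target $\sum_x m(x)(m(x)-1)\ll\kappa^2N^2/L$ are the right ones; the paper's argument amounts to exactly this bound. But none of the tools you list produces the saving of $L$, as you concede.

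\begin{itemize}
\item The per-pair count $1+Lg/\max(d,d')$, summed over $d\ne d'$, is at least $N(N-1)$ because of the $1$, so it can never beat $N^2$.
\item Your three-point configuration accounts for $\kappa^2$, but restricting the multiplier to $[L/2,L)$ gives no factor $1/L$. Counting pairs of points of $U$ at distance $id$ with $i\in[L/2,L)$ only reproduces the one-dimensional bound $L^2\ll\kappa N$.
\item Splitting into halves rather than parity classes discards the one structural feature that does help: the midpoint of two points of one half need not lie in the progression.
\item The Katz--Tao fallback cannot close the gap. It gives $\card U\gg_\eta N^{1/(\alpha+\eta)}$ for progressions of a fixed length $k_\eta$. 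That bound has no dependence on $L$ and is weaker than $\card U>N$, so it cannot yield an inequality of the form $L^3\ll\kappa^2N$.
\end{itemize}

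The missing idea is a reflection encoding. The extra factor of $L$ appears as an extra free point on a progression, and on the upper-bound side that point is pinned down by a congruence.
\begin{enumerate}
\item Split each $S_d$ by the parity of the index into $S_{d,0},S_{d,1}$ of length $\ell=L/2$. Then for $u,w$ in one class $S_{e,\delta}$, the midpoint $x=(u+w)/2$ lies in $S_e\subset U$.
\item In the incidence graph between the classes $S_{d,\varepsilon}$ and $U$, count the paths $S_{d,\varepsilon}-u-S_{e,\delta}-w$ (so $d\ne e$ and $u\ne w$). There are $(\ell-1)\sum_u m(u)(m(u)-1)\gg NL^3$ of them, by your Cauchy--Schwarz and $L\ge4$.
\item Send such a path to $(d,x,w)$. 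Then $u=2x-w$ is recovered. Since $e$ divides $w-u$ and $0<\lvert w-u\rvert\le H$, there are at most $\kappa$ choices for $e$. Since $u\equiv a_d\pmod d$, we get $w\equiv 2x-a_d\pmod d$.
\item Hence the number of paths is at most $\kappa\sum_{d}\sum_{c\bmod d}m_d(c)m_d(2c-a_d)$, where $m_d(c)$ counts elements of $U$ congruent to $c$ modulo $d$.
\item The map $c\mapsto 2c-a_d$ is at most two-to-one, so Cauchy--Schwarz bounds this by
$\sqrt2\,\kappa\sum_{u,v\in U}\card{\{d\in\mathcal D:d\mid u-v\}}\le\sqrt2\,\kappa\bigl(N\card U+\kappa\card U^2\bigr)\ll\kappa^2N^2$. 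This uses $\card U\le N$ a second time.
\end{enumerate}
Comparing with $NL^3$ gives the lemma. Note that the two factors of $\kappa$ come from recovering $e$ and from this energy bound, not from two prescribed differences.
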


\begin{proof}
Write $L=2\ell$. We split each progression according to the parity of its index so that the midpoint of any two points in one subprogression is an integer point of the original progression. Put
\[
 S_{d,\varepsilon}:=\{a_d+(2j+\varepsilon)d:0\le j<\ell\},
 \qquad \varepsilon\in\{0,1\}.
\]
Let $G$ be the bipartite graph whose left vertices are the $2|\mathcal D|$ sets $S_{d,\varepsilon}$, whose right vertices are the elements of $U$, and where an edge joins $S_{d,\varepsilon}$ and $x\in U$ if and only if $x\in S_{d,\varepsilon}$. Every left vertex has degree $\ell$. For a right vertex $x\in U$, denote by $\deg(x)$ its degree in $G$.

Let $\mathcal T$ be the set of all oriented paths $S_{d,\varepsilon}-u-S_{e,\delta}-w$ of length 3 in $G$.
Choosing the middle point $u$, the ordered pair of distinct neighbors of $u$, and then the final point $w$ gives the exact count
\begin{equation}\label{eq:local-path-identity}
 \card{\mathcal T}
 =(\ell-1)\sum_{u\in U}\deg(u)(\deg(u)-1).
\end{equation}
There are $2{|\mathcal D|}\ell={|\mathcal D|}L$ edges in $G$. Hence, the Cauchy--Schwarz inequality gives
\[
 \sum_{u\in U}\deg(u)(\deg(u)-1)
 \ge \frac{{|\mathcal D|}^2L^2}{|U|}-{|\mathcal D|}L.
\]
Using $|U|\le {|\mathcal D|}$ and $L\ge4$ in \eqref{eq:local-path-identity}, we obtain
\begin{equation}\label{eq:local-path-lower}
 \card{\mathcal T}\gg {|\mathcal D|}L^3.
\end{equation}

We next give an upper bound on $\card{\mathcal T}$. 
Given a path $S_{d,\varepsilon}-u-S_{e,\delta}-w$, put $x:=(u+w)/2$.
The definition of $S_{e,\delta}$ implies $x\in S_e\subset U$. Since $u\in S_{d,\varepsilon}$, we have $u\equiv a_d\pmod d$ and hence
\begin{equation}\label{eq:local-reflection-congruence}
 w\equiv2x-a_d\pmod d.
\end{equation}
Map the path to the tuple $(d,x,w)$. For a fixed image tuple, the point $u=2x-w$ is determined. 
Moreover, $d$ and $u$ determine $\varepsilon$.
Any possible middle difference $e$ must divide the nonzero integer $w-u$. Since $u,w\in J$, we have $0<\lvert w-u\rvert\le H$, so there are at most $\kappa$ choices for $e$. For any such $e$, $e$ and $u$ determine $\delta$. Thus, the map is at most $\kappa$-to-one, and \eqref{eq:local-reflection-congruence} gives
\begin{equation}\label{eq:local-path-upper-one}
 \card{\mathcal T}
 \le \kappa\sum_{d\in\mathcal D}\sum_{c\bmod d}
 m_d(c)m_d(2c-a_d),
\end{equation}
where $m_d(c):=\card{\{z\in U:z\equiv c\pmod d\}}$ for any $d\in\mathcal D$ and $c\in\Z/d\Z$.

For odd $d$, the map $c\mapsto2c-a_d$ is a permutation modulo $d$. For even $d$, every value has either zero or two preimages. Thus, the Cauchy--Schwarz inequality gives
\begin{align}\label{eq:local-residue-cs}\notag
 \sum_{c\bmod d}m_d(c)m_d(2c-a_d)&\le\Bigl(\sum_{c\bmod d} m_d(c)^2 \Bigr)^{\!1/2} \Bigl(\sum_{c\bmod d} m_d(2c-a_d)^2\Bigr)^{\!1/2}\\ &\le \sqrt2 \sum_{c\bmod d}m_d(c)^2.
\end{align}
Observe that
\begin{align*}
 \sum_{d\in\mathcal D}\sum_{c\bmod d}m_d(c)^2
 &=\sum_{d\in \mathcal D}\left| \{(u,v):u\in U,v\in U, u\equiv v\! \pmod{d}\} \right|\\
 &=\sum_{u,v\in U}\left|\{d\in \mathcal D:u\equiv v\! \pmod{d}\}\right|\\
 &={|\mathcal D|}|U|+\sum_{\substack{u,v\in U\\u\ne v}}
 \card{\{d\in\mathcal D:u\equiv v\! \pmod{d}\}}.
\end{align*}
Every nonzero difference $u-v$ has absolute value at most $H$, so the last cardinality is at most $\kappa$. 
Since $S_d\subset J$ has diameter $(L-1)d$ and $J$ has
length $H$, we have $d\le H$ for every $d\in\mathcal D$.
Consequently $\kappa\ge1$.
Since $|U|\le {|\mathcal D|}$, we derive
\begin{equation}\label{eq:local-energy-upper}
 \sum_{d\in\mathcal D}\sum_{c\bmod d}m_d(c)^2
 \le {|\mathcal D|}|U|+\kappa |U|(|U|-1)
 \le2\kappa {|\mathcal D|}^2.
\end{equation}
Combining \eqref{eq:local-path-upper-one}--\eqref{eq:local-energy-upper} gives
$\card{\mathcal T}\ll\kappa^2{|\mathcal D|}^2$, which together with \eqref{eq:local-path-lower} proves the lemma.
\end{proof}


\section{The all-integer upper bound}

We first record the elementary divisor estimate used to control $\kappa$. For any integer $t>0$, let $\tau(t)$ denote its number of positive divisors. For any integer $n>0$, let $\Delta_n$ be the maximum $\tau(t)$ over all $1\le t\le n^2$.

\begin{lemma}\label{lem:divisor-bound}
As $n\to\infty$,
\[
 \Delta_n \le \exp\!\left(O\!\left(\frac{\log n}{\log\log n}\right)\right).
\]
\end{lemma}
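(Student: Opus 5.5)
This is the classical Wigner--Ramanujan divisor bound $\tau(t)\le \exp\bigl(O(\log t/\log\log t)\bigr)$, applied with $t\le n^2$. Since $\log(n^2)=2\log n$ and $\log\log(n^2)\sim\log\log n$, it suffices to show that every $t\le N$ satisfies $\log\tau(t)\ll \log N/\log\log N$, and then to set $N=n^2$. We may assume $t\ge 2$ and write $t=\prod p^{\alpha_p}$, so that $\tau(t)=\prod(\alpha_p+1)$.

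The plan is the standard splitting of the primes at a threshold. Fix a parameter $y\ge 2$ and compare $\tau(t)$ with $t^{\eta}$ for some $\eta>0$:
\[
 \frac{\tau(t)}{t^{\eta}}=\prod_{p\mid t}\frac{\alpha_p+1}{p^{\eta\alpha_p}}.
\]
We treat the two ranges of primes separately.

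For primes $p\ge y$, choose $\eta$ so that $y^{\eta}\ge 2$, for instance $\eta=\log 2/\log y$. Then $p^{\eta\alpha}\ge 2^{\alpha}\ge \alpha+1$, so each of these factors is at most $1$.

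For primes $p<y$, use $p^{\eta\alpha}\ge 2^{\eta\alpha}=e^{\eta\alpha\log 2}\ge 1+\eta\alpha\log 2$. This gives
\[
 \frac{\alpha+1}{p^{\eta\alpha}}\le \frac{\alpha+1}{1+\eta\alpha\log2}\le \frac{1}{\eta\log 2}\quad(\text{when }\eta\log2\le1).
\]
Hence these factors contribute at most $(1/(\eta\log2))^{\pi_{\mathbb P}(y)}\le (\log y)^{y}$, using $1/(\eta\log 2)=\log y/(\log 2)^2$ and absorbing constants. Combining the two ranges,
\[
 \log\tau(t)\le \frac{\log 2\,\log t}{\log y}+O(y\log\log y).
\]

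Now choose $y=\log N/(\log\log N)^2$, with $N\ge t$. Then $\log y\sim\log\log N$, so the first term is $\ll \log N/\log\log N$. The second term is $\ll \log N\cdot\log\log\log N/(\log\log N)^2$, which is also $o(\log N/\log\log N)$. Taking $N=n^2$ and the maximum over $t\le n^2$ yields $\log\Delta_n\ll \log n/\log\log n$.

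The only real care is in choosing $y$ so that the small-prime term $\pi_{\mathbb P}(y)\log\log y$ does not dominate. Any $y$ slightly smaller than $\log N$ works; even the trivial bound $\pi_{\mathbb P}(y)\le y$ suffices, and no prime number theorem is needed.
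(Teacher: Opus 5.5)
Your proof is correct and is essentially the paper's argument. Both split the primes at a threshold, and your large-prime step ($p^{\eta\alpha}\ge 2^{\alpha}\ge\alpha+1$ with $\eta=\log 2/\log y$) is the paper's bound $\sum_{p>y}a_p\le\log t/\log y$ combined with $a+1\le 2^a$, written multiplicatively. The only difference is in the small primes: you use the $t^{\eta}$ weight to bound each factor by $1/(\eta\log 2)$, whereas the paper uses the trivial exponent bound $a_p\le 2\log_2 n$ with threshold $\sqrt{\log n}$, which is enough since only an $O$-constant is needed.

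Two minor points. First, the classical bound is Wigert's, not Wigner's. Second, your closing remark about the threshold should be sharpened: with $\pi_{\mathbb P}(y)\le y$, the small-prime term $y\log\log y$ is $O(\log N/\log\log N)$ only if $y$ is smaller than $\log N$ by a factor of at least order $\log\log N\,\log\log\log N$. Your explicit choice $y=\log N/(\log\log N)^2$ satisfies this.
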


\begin{proof}
Fix $t\le {n^2}$ and write $t=\prod_p p^{a_p}$. For $p\le \sqrt{\log {n}}$, we have $a_p\le 2\log_2 {n}$, and hence
\begin{equation}\label{small}
\log\prod_{p\le \sqrt{\log {n}}}(a_p+1)
 \le \sqrt{\log {n}}\log\!\left(1+2\log_2 {n}\right)
 =o\!\left(\frac{\log {n}}{\log\log {n}}\right).
\end{equation}
For $p>\sqrt{\log {n}}$, the inequality $\sum_{p}a_p\log p\le 2\log {n}$ gives
\[
 \sum_{p>\sqrt{\log {n}}}a_p
 \le\frac{2\log {n}}{\log \sqrt{\log {n}}}
 =\frac{4\log {n}}{\log\log {n}}.
\]
Since $a+1\le2^a$ for every positive integer $a$, we have
\begin{equation}\label{big}
 \log\prod_{p>\sqrt{\log {n}}}(a_p+1)
 \le(\log2)\sum_{p>\sqrt{\log {n}}}a_p
 =O\!\left(\frac{\log {n}}{\log\log {n}}\right).
\end{equation}
Summing \eqref{small} and \eqref{big} yields the desired estimate.
\end{proof}

\begin{proof}[Proof of Theorem~\ref{thm:main-upper}]
Let $C\ge 1$ be given by Lemma~\ref{lem:local-cube-root}.
Choose an absolute constant $A\ge2$ such that $8A^3>C$.
Fix an integer $n$, put $K:=\lceil\log_2 n\rceil$, and, for $0\le k\le K$, define
\[
 \mathcal D_k:=\{d\in\{1,\dots,n\}:2^{k-1}<d\le 2^k\}.
\]
These classes partition $\{1,\dots,n\}$. For each $0\le k\le K$, put
\begin{equation}\label{eq:integer-block-parameters}
 L_k:=2\left\lceil A\left(1+\Delta_n^{2/3}{|\mathcal D_k|}^{1/3}\right)\right\rceil,
 \qquad H_k:=2^kL_k,\qquad {\rm and}\qquad H:=\sum_{0\le k\le K}H_k.
\end{equation}

Fix $m\in\Z$ and partition $I(m,H)$ into pairwise disjoint intervals $J_k$ of lengths $H_k$ for $0\le k\le K$. Since every $d\in\mathcal D_k$ satisfies $d\le 2^k$, the block $J_k$ contains at least $L_k$ multiples of $d$. Choose $L_k$ consecutive multiples of each $d\in\mathcal D_k$ inside $J_k$, and denote their set by $S_d$.

We claim that, for any subset $S\subseteq \mathcal D_k$, we have $\left|\bigcup_{d\in S}S_d\right|\ge |S|$. Suppose otherwise. Then there is a nonempty set $S\subseteq\mathcal D_k$ with ${\card U}<{\card S}$, where $U$ is the union of $S_d$ over all $d\in S$.
By Lemma~\ref{lem:divisor-bound}, \eqref{eq:integer-block-parameters}, and the bounds $2^k\le2n$ and ${|\mathcal D_k|}\le 2^k$,
\begin{align*}
 H_k
 \ll 2^k\left(1+\Delta_n^{2/3}{|\mathcal D_k|}^{1/3}\right)
 \le 2n\left(1+\Delta_n^{2/3}(2n)^{1/3}\right)
 \ll n^{4/3+o(1)}<n^2
\end{align*}
for all sufficiently large $n$, uniformly in $k$. Hence every nonzero difference of two points of $J_k$ has absolute value at most $n^2$. The parameter $\kappa$ in Lemma~\ref{lem:local-cube-root}, applied to the subfamily indexed by $S$, is therefore at most $\Delta_n$. That lemma gives $L_k^3\le C\Delta_n^2{\card S}\le C\Delta_n^2{|\mathcal D_k|}$.
On the other hand, \eqref{eq:integer-block-parameters} gives $L_k^3\ge 8A^3\Delta_n^2{|\mathcal D_k|}>C\Delta_n^2{|\mathcal D_k|}$, a contradiction. Thus, Hall's condition holds for every $k$.

By Hall's marriage theorem, the sets $\{S_d:d\in\mathcal D_k\}$ admit a system of distinct representatives in $J_k$. Since the blocks $J_k$ are pairwise disjoint, these systems combine to give distinct integers $a_1,\dots, a_n\in I(m,H)$ such that $d\mid a_d$ for every $1\le d\le n$. As $m$ was arbitrary, $F(n)\le H$. Finally,
\begin{align*}
 H\ll \sum_{k=0}^K 2^k +\Delta_n^{2/3}\sum_{k=0}^K 2^k{|\mathcal D_k|}^{1/3} \ll n +\Delta_n^{2/3}\sum_{k=0}^K {2}^{4k/3} \ll \Delta_n^{2/3}n^{4/3}.
\end{align*}
The desired bound then follows from Lemma~\ref{lem:divisor-bound}.
\end{proof}

\section{The prime upper bound}

In this section, we apply Lemma~\ref{lem:local-cube-root} to prove Theorem~\ref{thm:main-prime-upper}. 

\begin{proof}[Proof of Theorem~\ref{thm:main-prime-upper}]
Let $C\ge 1$ be given by Lemma~\ref{lem:local-cube-root}, and choose an absolute constant $A\ge 2$ with $8A^3>C$. Choose an absolute constant $X_\ast$ such that, for any $X\ge X_\ast$,
\begin{equation}\label{eq:prime-block-short}
 2(A+1)X^{4/3}+1<\frac{X^2}{4}.
\end{equation}
Enlarging the implicit constant if necessary, we may assume $n\ge 2X_\ast$. Put $X_j=2^{-j}n$, and let $j_0$ be the largest integer such that $X_{j_0}\ge2X_\ast$. For $0\le j\le j_0$, set
\[
 \Pcal_j:=\{p\text{ prime}:p>2X_\ast,\ X_j/2<p\le X_j\}.
\]
The sets $\Pcal_j$ partition the primes in $(2X_\ast,n]$. For each nonempty $\Pcal_j$, put
\[
 L_j:=2\left\lceil A{\card{\Pcal_j}}^{1/3}\right\rceil
 \qquad {\rm and}\qquad
 H_j:=\left\lceil X_jL_j\right\rceil.
\]
Also put
\[
 H_\ast:=\sum_{p\le2X_\ast}p \qquad {\rm and}\qquad
 H:=H_\ast+\sum_{\Pcal_j\ne\varnothing}H_j.
\]

Let $I$ be an interval of length $H$. Partition its first $H_\ast$ elements into consecutive blocks of lengths $p$, one for each prime $p\le2X_\ast$. Each such block contains a multiple of the corresponding prime, so these disjoint blocks provide distinct multiples of all primes at most $2X_\ast$. Partition the remaining elements of $I$ into consecutive intervals $I_j$ of lengths $H_j$ for each $j$ with $\Pcal_j\ne\varnothing$.

Fix $j$ with ${\card{\Pcal_j}}>0$, and let $\varnothing\ne S\subseteq\Pcal_j$. Since every $p\in S$ satisfies $p\le X_j$ and $H_j\ge X_jL_j$, the interval $I_j$ contains at least $L_j$ consecutive multiples of $p$. Choose $L_j$ of them for each $p\in S$, and let $U$ be their union. Since ${\card{\Pcal_j}}\ge1$,
\[
 L_j\le2(A+1){\card{\Pcal_j}}^{1/3}\le2(A+1)X_j^{1/3},
\]
and hence \eqref{eq:prime-block-short} gives
\[
 \card{I_j}=H_j
 \le2(A+1)X_j^{4/3}+1
 <\frac{X_j^2}{4}<pq
\]
for any distinct $p,q\in S$. Thus, every integer $t$ with $0<t\le H_j$ is divisible by at most one prime in $S$, so the parameter $\kappa$ in Lemma~\ref{lem:local-cube-root} equals $1$.
If $\card U\le\card S$, that lemma would give $L_j^3\le C\card S\le C{\card{\Pcal_j}}$, whereas the definition of $L_j$ gives $L_j^3\ge8A^3{\card{\Pcal_j}}>C{\card{\Pcal_j}}$.
So, $\card U>\card S$. 
Since $U\subseteq \Gamma_{I_j}(S)$, we have $\card{\Gamma_{I_j}(S)}>\card S$ for every nonempty $S\subseteq\Pcal_j$. Hence, by Hall's marriage theorem, there exist distinct integers in $I_j$, one divisible by each prime in $\Pcal_j$. Since the intervals $I_j$ are pairwise disjoint, these choices, together with those for the primes at most $2X_\ast$, give distinct multiples of every prime at most $n$ in $I$. So, $h_{\mathbb P}(n)\le H$.

Note that $j_0=O(\log n)$, $\sum_{j\ge0}X_j\le 2n$, and ${\card{\Pcal_j}}\le\pi_{\mathbb P}(n)$. We derive
\begin{align*}
 H\le H_\ast+O(\log n)+\sum_{\Pcal_j\ne\varnothing}X_jL_j
 \ll n+\sum_{\Pcal_j\ne\varnothing}X_j{\card{\Pcal_j}}^{1/3}
 \ll n\pi_{\mathbb P}(n)^{1/3}.
\end{align*}
The desired result then follows from \eqref{eq:standard-prime-estimates}.
\end{proof}

\section{The prime lower bound}

We begin with the quadratic-residue compression used in the construction of Green and Ruzsa \cite[Section~5]{GreenRuzsa2019}.

\begin{lemma}\label{lem:compression}
Let $q_1,\dots,q_r$ be distinct odd primes, and put
\[
 Q=\prod_{i=1}^r q_i,
 \qquad
 \delta=\frac{1}{2^{r}}\prod_{i=1}^r\left(1+\frac1{q_i}\right).
\]
For every integer $j$, the set $\{d^2+jd \pmod Q:d\in\Z \}$ has cardinality at most $Q\delta$.
\end{lemma}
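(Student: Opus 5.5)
The plan is to reduce the count to one prime at a time via the Chinese Remainder Theorem, and then do a single completing-the-square computation modulo each $q_i$.

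\textbf{Step 1 (CRT factorisation).} Since the $q_i$ are distinct primes, reduction modulo $Q$ is the same as simultaneous reduction modulo every $q_i$. Hence the set $\{d^2+jd \bmod Q : d\in\Z\}$ embeds into the product over $i$ of the sets
\[
 V_i:=\{d^2+jd \bmod q_i : d\in\Z\}.
\]
In fact the map is a bijection, because $d$ ranges over all residues modulo $Q$ and so its components modulo the $q_i$ can be chosen independently. Either way we get
\[
 \card{\{d^2+jd \bmod Q\}}\le\prod_{i=1}^r\card{V_i}.
\]
It therefore suffices to show that
\[
 \card{V_i}\le\frac{q_i+1}{2}=\frac{q_i}{2}\Bigl(1+\frac1{q_i}\Bigr),
\]
since multiplying these bounds over $i$ gives exactly $Q\delta$.

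\textbf{Step 2 (a single odd prime).} Fix an odd prime $q$. Because $2$ is invertible modulo $q$, we can complete the square:
\[
 d^2+jd\equiv\Bigl(d+\tfrac{j}{2}\Bigr)^2-\tfrac{j^2}{4}\pmod q.
\]
As $d$ runs over $\Z/q\Z$, the shifted value $d+j/2$ also runs over all of $\Z/q\Z$. So $V$ is a translate of the set of squares modulo $q$. That set consists of $0$ together with the $(q-1)/2$ nonzero quadratic residues, which gives $\card V=(q+1)/2$. This is the required bound.

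None of the steps is a real obstacle. The only place needing care is where oddness of the $q_i$ is used: it is what makes completing the square legitimate, and for $q=2$ the map $d\mapsto d^2+jd$ would behave differently. The independence of the residues in Step 1 is just CRT, and we only need the inequality direction there in any case.
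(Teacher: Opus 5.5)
Your proof is correct and matches the paper's argument: complete the square modulo each odd prime $q_i$ to get at most $(q_i+1)/2$ values, then combine these bounds via the Chinese remainder theorem to obtain $\prod_i (q_i+1)/2 = Q\delta$. Your extra remark that the CRT map is in fact a bijection is true but not needed, since only the upper bound is used.
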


\begin{proof}
Modulo $q_i$,
\[
 d^2+jd=\left(d+\frac j2\right)^2-\frac{j^2}{4},
\]
where division by $2$ is taken modulo $q_i$. Since there are $(q_i+1)/2$ quadratic residues modulo $q_i$, including $0$, the expression $d^2+jd$ takes at most $(q_i+1)/2$ values modulo $q_i$. The Chinese remainder theorem therefore gives at most
\[
 \prod_{i=1}^r\frac{q_i+1}{2}=Q\delta
\]
residue classes modulo $Q$.
\end{proof}

We shall also use the following averaging consequence of the prime number theorem.

\begin{lemma}\label{lem:prime-rich-block}
Let $D=D(n)$ be a positive integer with $D=o(n)$. For all sufficiently large $n$, there is an integer $v$ such that $[v,v+D)\subset[n/2,n)$
and $[v,v+D)$ contains at least $D/(2\log n)$ primes.
\end{lemma}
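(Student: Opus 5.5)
The approach is to average over a partition of $[n/2,n)$ into consecutive blocks of length $D$ and take the best one. Concretely, I would consider the blocks $B_i=[v_i,v_i+D)$ with $v_i=\lceil n/2\rceil+iD$ for $0\le i<N$, where $N:=\lfloor (n-\lceil n/2\rceil)/D\rfloor$. These blocks are pairwise disjoint and all lie in $[n/2,n)$. Since $D=o(n)$, we have $N\to\infty$ and $ND\ge n/2-D-1=(1/2-o(1))n$. The part of $[n/2,n)$ not covered by the blocks has length less than $D+1=o(n)$.

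By the prime number theorem in the form \eqref{eq:standard-prime-estimates}, the number of primes in $[n/2,n)$ is
\[
\pi_{\mathbb P}(n)-\pi_{\mathbb P}(n/2)+O(1)=\frac{n}{\log n}-\frac{n/2}{\log(n/2)}+o\!\left(\frac{n}{\log n}\right)=\left(\tfrac12+o(1)\right)\frac{n}{\log n},
\]
using $\log(n/2)\sim\log n$. The uncovered tail contains at most $D+1=o(n/\log n)$ primes only if $D=o(n/\log n)$, and that need not hold. So instead of bounding the primes in the tail, I would bound the primes lying in the blocks from below directly, using a trivial estimate on the tail that avoids the prime number theorem there.

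The tail has at most $D+1$ integers. Among them, at most $D/2+2$ are odd and hence candidates for primes. The total prime count is $(1/2+o(1))n/\log n$. If $D\le n/\log^2 n$, the tail contributes $o(n/\log n)$ primes, and the blocks together contain $(1/2-o(1))n/\log n$ primes. Their average is then at least
\[
\frac{(1/2-o(1))\,n/\log n}{N}\ge\frac{(1/2-o(1))\,n/\log n}{n/(2D)}=(1-o(1))\frac{D}{\log n}\ge\frac{D}{2\log n}.
\]
Some block attains at least the average, and that block gives the required $v$.

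If $D>n/\log^2 n$, the crude tail bound fails, and this is the main obstacle. The fix is to choose the partition offset cleverly. The tail has length below $D$, so I would place the blocks to cover $[n/2,\,n/2+ND)$ and compare the counts of primes in $[n/2,n)$ and in $[n/2,n/2+ND)$. Both intervals have endpoints of size $\asymp n$, so the prime number theorem applies to each. Writing $y=n/2+ND$, the covered part has $\pi_{\mathbb P}(y)-\pi_{\mathbb P}(n/2)\sim (y-n/2)/\log n=ND/\log n$ primes. This asymptotic holds uniformly because $y-n/2\ge n/2-D\gg n$, and the error in the prime number theorem is $o(n/\log n)$, which is $o(ND/\log n)$. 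Averaging over the $N$ blocks gives $(1-o(1))D/\log n\ge D/(2\log n)$ primes in some block. This version works for every $D=o(n)$ at once, so I would present only this version.
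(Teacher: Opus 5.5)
Your final argument is correct and is essentially the paper's proof: split $[\lceil n/2\rceil,n)$ into $\lfloor\lfloor n/2\rfloor/D\rfloor$ consecutive blocks of length $D$, apply the prime number theorem at the two ends of the covered range (whose right end is $n-o(n)$) to get $(\tfrac12+o(1))\,n/\log n$ primes there, and average over the blocks. The preliminary case split with the crude tail bound is unnecessary, and no clever offset is needed: the paper starts the blocks at $\lceil n/2\rceil$, exactly as in your first paragraph, and simply evaluates $\pi_{\mathbb P}$ at the end of the last block rather than at $n$.
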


\begin{proof}
Let $v_0 :=\lceil n/2\rceil$, $R:=\left\lfloor (n-v_0)/D \right\rfloor$, and $x:=v_0+RD$.
For large $n$, $R\ge1$, and the $R$ intervals
\[
 [v_0+jD,v_0+(j+1)D),
 \qquad 0\le j<R,
\]
are disjoint and contained in $[n/2,n)$. Moreover, $n-D<x\le n$. Since $D=o(n)$, the prime number theorem gives
\[
 \pi_{\mathbb P}(x-1)-\pi_{\mathbb P}(v_0-1)
 =\left(\frac12+o(1)\right)\frac{n}{\log n}.
\]
As $R\le n/(2D)$, at least one of the $R$ intervals contains
\[
 \frac{1}{R}\left(\frac12+o(1)\right)\frac{n}{\log n}
\ge \left(1+o(1)\right)\frac{D}{\log n}
\ge \frac{D}{2\log n}
\]
primes.
\end{proof}

\begin{proof}[Proof of Theorem~\ref{thm:main-lower}]
Fix $\eta\in(0,1)$. All $o(1)$-terms in this proof are taken with $\eta$ fixed. Write
\[
 r:=\left\lfloor\frac{(1-\eta){\log n}}{{\log\log n}}\right\rfloor.
\]
Let $3=q_1<\cdots<q_r$ be the first $r$ odd primes, and define $Q$ and $\delta$ as in Lemma~\ref{lem:compression}. It follows from \eqref{eq:standard-prime-estimates} that
\begin{align}
 \log Q
 &=-\log2+\sum_{p\le q_r}\log p
 =(1-\eta+o(1)){\log n},\label{eq:Q-size}\\
 \log\delta
 &=-r\log2+O(\log\log q_r)
 =-(\log2+o(1))r.\label{eq:delta-size}
\end{align}

We choose
\[
 K:=\left\lfloor\frac{1}{4\sqrt{\delta\ {\log n}}}\right\rfloor
 \qquad {\rm and}\qquad
 D:=\left\lfloor\frac{n}{4K}\right\rfloor.
\]
Equation \eqref{eq:delta-size} and the fact that $K\to\infty$ give
\begin{equation}\label{eq:K-size}
 \log K=
 \left(\frac{(1-\eta)\log2}{2}+o(1)\right)
 \frac{{\log n}}{{\log\log n}}.
\end{equation}
In particular, $K=n^{o(1)}$, $D=n^{1-o(1)}$, and $D=o(n)$. Moreover, \eqref{eq:Q-size} gives $Q=o(D)$.

Apply Lemma~\ref{lem:prime-rich-block} to this value of $D$. There exist an integer $v$ and a set $\Pcal$ of primes in $[v,v+D)\subset[n/2,n)$ such that
\begin{equation}\label{eq:P-size}
 \card\Pcal\ge\frac{D}{2{\log n}}.
\end{equation}
The largest prime factor of $Q$ is $q_r=O({\log n})$, whereas every $p\in\Pcal$ is at least $n/2$, so $(p,Q)=1$ for sufficiently large $n$. For each $p\in\Pcal$, let $b_p$ be the representative of $p^2\pmod Q$ with $1\le b_p<Q$. For each $0\le j<K$, set $T_j:=\{b_p+jp:p\in\Pcal\}$, and let $T:=\bigcup_{j=0}^{K-1}T_j$.

For each $j$, Lemma~\ref{lem:compression} shows that $T_j$ occupies at most $Q\delta$ residue classes modulo $Q$. Since the primes in $\Pcal$ lie in an interval of length $D$, the diameter of $T_j$ is less than $Q+jD$. A residue class modulo $Q$ contributes at most $1+(Q+jD)/Q$ elements to such an interval, and therefore $\card{T_j}\le(2Q+jD)\delta$.
Summing over $0\le j<K$ and using $Q\le D$ for large $n$,
\begin{equation}\label{eq:T-small}
 \card T \le 2KQ\delta+\frac{K(K-1)}2D\delta \le 3K^2D\delta
 \le\frac{3D}{16{\log n}} <\card\Pcal,
\end{equation}
where the last inequality follows from \eqref{eq:P-size}.

By the Chinese remainder theorem, there is an integer $m$ satisfying $m\equiv-b_p\pmod p$ for every $p\in\Pcal$.
Thus, $p\mid m+b_p+jp$ for every $p\in\Pcal$ and every integer $j$. Put $H:=Kv$. Since $KD\le n/4$, $Q=o(D)$, and $v\ge n/2$, we have
\begin{equation}\label{QKD}
 Q+KD\le Q+\frac n4=o(n)+\frac n4<\frac n2\le v
\end{equation}
for all sufficiently large $n$. We claim that, for each $p\in\Pcal$, the multiples of $p$ in $I(m,H)$ are exactly $\{m+b_p+jp: 0\le j<K\}$.
Indeed, $Q<n/2\le p$ gives $b_p-p<0<b_p$, while
\[
 b_p+(K-1)p
 <Q+(K-1)(v+D)
 <Kv=H
 <b_p+Kp,
\]
where we have used \eqref{QKD} and the assumption $\Pcal\subset [v,v+D)$. Consequently,
\[
 I(m,H)\cap\bigcup_{p\in\Pcal}p\Z=\{m+t:t\in T\}.
\]
By \eqref{eq:T-small}, this set has cardinality smaller than $\card\Pcal$. Hall's theorem therefore rules out distinct multiples for all the primes in $\Pcal$, and hence $h_{\mathbb P}(n)>H$.

Finally, $v\ge n/2$, so \eqref{eq:K-size} gives
\[
 h_{\mathbb P}(n)>H\ge\frac{nK}{2}
 =n\exp\!\left(
 \left(\frac{(1-\eta)\log2}{2}+o(1)\right)
 \frac{\log n}{\log\log n}
 \right).
\]
Since $\eta>0$ was arbitrary, the desired lower bound for $h_{\mathbb P}(n)$ follows. 
\end{proof}

\appendix
\setcounter{section}{1}
\section*{Appendix: A Katz--Tao projection argument}
\addcontentsline{toc}{section}{Appendix: A Katz--Tao projection argument}
\setcounter{theorem}{0}

In this section, we show that the Katz--Tao projection estimate directly leads to an upper bound for $F(n)$. Although this bound is weaker than Theorem~\ref{thm:main-upper}, it already improves upon the previous best bound $F(n)\ll n^{3/2}$ of Erd\H{o}s--Pomerance.

\begin{theorem}\label{thm:katz-tao-upper}
Let $\beta\in(1,2)$ be the root of $2\beta^3-8\beta^2+8\beta-1=0$.
Then,
\[
 F(n)\le n^{\beta+o(1)}.
\]
\end{theorem}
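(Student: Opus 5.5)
The plan is to rerun the block-decomposition framework of Section~3, replacing Lemma~\ref{lem:local-cube-root} by a weaker local estimate that comes from the Katz--Tao projection (sum--difference) inequality. That inequality says: if $G\subseteq A\times B$ and the projections $(a,b)\mapsto a$, $b$, $a+b$ all have images of size at most $N$, then $\card{\{a-b:(a,b)\in G\}}\le N^{2-1/6}$. Iterated or weighted versions of it bound $\card G$ when several linear projections are simultaneously small.

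\textbf{Step 1: set up the graph and its small projections.} As in Section~3, I fix a dyadic class $\mathcal D_k$, a block $J_k$ of length $H_k=2^kL_k$, and progressions $S_d\subset J_k$ of $L_k$ consecutive multiples of $d$. Suppose Hall's condition fails. Then there is $S\subseteq\mathcal D_k$ whose union $U$ satisfies $\card U<\card S=:N$.

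I encode the configuration as the set $G$ of pairs $(u,d)$ with $d\in S$ and $u,u+d,u+2d\in S_d$, so that $\card G\ge N(L_k-2)\gg NL_k$. The projections $u$, $u+d$ and $u+2d$ all take values in $U$, which has size less than $N$. For the direction $d$ itself, I use the lattice structure $u\equiv 0\pmod d$. Since $u\le n^2$ in the relevant range, each difference determines at most $\Delta_n$ values of $d$, exactly as $\kappa$ was controlled in Section~3. This is what converts the information about $d$ into a bounded-multiplicity statement about differences.

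\textbf{Step 2: apply Katz--Tao to bound $\card G$.} I apply the Katz--Tao lemma to the reparametrised graph $\{(u,u+2d)\}\subseteq U\times U$. Its ``sum'' coordinate $2u+2d=2(u+d)$ lies in $2U$, so all three projections have size at most $N$. Katz--Tao then makes the difference set $\{2d\}$ small in a quantitative way.

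Next I count $G$ fibrewise: for each fixed $d$ there are about $L_k$ choices of $u$. Combining this with the divisor-multiplicity bound $\Delta_n$ should give an inequality of the shape $NL_k\ll \Delta_n^{O(1)}N^{\gamma}$ for some $\gamma<2$ coming from the exponent $2-1/6$, possibly after a Cauchy--Schwarz step similar to the path count in Lemma~\ref{lem:local-cube-root}. That is, it should give $L_k\le N^{\gamma-1+o(1)}$. Choosing $L_k$ slightly above this threshold then forces Hall's condition in each block, and gluing the blocks gives $F(n)\le\sum_k2^kL_k$.

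\textbf{Step 3: optimise.} The exponent should come from optimising this bound. Splitting $S$ according to the typical multiplicity of points of $U$, and balancing the resulting regimes, should produce the cubic $2\beta^3-8\beta^2+8\beta-1=0$. The final step is to check that the optimal choice of $L_k$ as a power of $\card{\mathcal D_k}$ gives $\sum_k 2^kL_k=n^{\beta+o(1)}$.

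\textbf{Main obstacle.} The main obstacle is Step~2: choosing the right pair of coordinates so that three projections are genuinely small, and the fourth (the difference $d$) is controlled only through the divisor bound rather than by a size bound. If the naive application loses too much, I would first try the refined Katz--Tao version that uses four projections $a,b,a+b,a+2b$, with the graph $\{(u,d)\}$ directly. I would then treat the distinctness of the $d$'s as the ``large'' direction, and absorb all divisor multiplicities into the $n^{o(1)}$ factor via Lemma~\ref{lem:divisor-bound}.
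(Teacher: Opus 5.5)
There is a genuine gap: Step~2 cannot produce the inequality you need, and the mechanism that turns a Katz--Tao estimate into Hall's condition is missing.

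\textbf{The estimate you apply cannot see $L_k$.} A sum--difference estimate controls only the number of \emph{distinct} differences. Apply it to $\{(u,u+2d)\}$ after keeping one $u$ for each $d$. This gives $\card S\le C\card U^{11/6}$, i.e.\ $\card U\gg\card S^{6/11}$. With your normalisation (all projections bounded by $N=\card S$) it says only $N\le N^{11/6}$, which is vacuous. The estimate is blind to how many $u$ share a given $d$, so $L_k$ never enters. The hoped-for bound $NL_k\ll\Delta_n^{O(1)}N^{\gamma}$ therefore has no source. Indeed, small projections alone give no nontrivial bound on $\card G$: take $G=A\times A$ with $A$ an interval. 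The divisor bound is also irrelevant, because the $d$'s are distinct and so the difference map is already injective on such a selection.

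\textbf{The missing idea: additivity over many disjoint short blocks.} An interval of length $k_\eta n$ contains a $k_\eta$-term progression of difference $d$ for every $d\le n$. So the projection estimate gives $\card{\Gamma_{I_b}(S)}\ge c\card S^{1/\theta}$ on each of $B=\lceil c^{-1}n^{1-1/\theta}\rceil$ disjoint blocks $I_b$. Summing gives $\card{\Gamma_I(S)}\ge n^{1-1/\theta}\card S^{1/\theta}\ge\card S$, because $\card S\le n$. Hence $F(n)\le Bk_\eta n\ll n^{2-1/\theta}$. In your dyadic framework the same trick works: cut $J_k$ into $L_k/k_\eta$ sub-blocks of length $k_\eta 2^k$ and take $L_k\asymp 2^{k(1-1/\theta)}$.

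\textbf{Step~3 misidentifies where the cubic comes from.} It is not the output of balancing regimes. It is simply $\beta=2-1/\alpha$, where $\alpha$ is the Katz--Tao exponent with $\alpha^3-4\alpha+2=0$ (substitute $\alpha=1/(2-\beta)$), and $\theta=\alpha+\eta$ with $\eta\to0$.

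Reaching that exponent requires the multi-slope Katz--Tao estimate, whose finite slope set $R_\eta\subset\Q\cup\{\infty\}$ depends on $\eta$. The three-projection bound $N^{11/6}$ you quote would give only $\beta=16/11$. Your four-projection fallback ($a,b,a+b,a+2b$, exponent $7/4$) would give only $\beta=10/7$. Both exceed the root $\beta\approx1.403$.

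To feed the multi-slope estimate, take progressions of length $k_\eta$. Then apply a linear change of variables such as $\Phi(x,y)=(x+(J+M)y,\,x+Jy)$ to $\{(a_d,d)\}$. Choose $M,J$ so that every $j_r:=J+M(1+r)^{-1}$ is a nonnegative integer. With this choice:
\begin{itemize}
\item $\pi_{-1}\circ\Phi$ gives $Md$, which is injective in $d$;
\item each finite-slope $\pi_r\circ\Phi$ gives $(1+r)(a_d+j_rd)$;
\item $\pi_\infty\circ\Phi$ gives $a_d+Jd$.
\end{itemize}
Since $0\le j_r<k_\eta$, each of these is a fixed dilate of a point of the union. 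So every controlling projection has size at most the size of the union.
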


We first state the projection estimate in the rational form needed here. For $r\in\Q$, define $\pi_r(x,y):=x+ry$ and $\pi_\infty(x,y):=y$.
Katz and Tao formulate the statement $\mathrm{SD}(\alpha)$ in terms of finite sets on which $\pi_{-1}$ is injective and prove it for the exponent below \cite[Definition~3.1 and Section~4]{KatzTao2002}. Green and Ruzsa record the rational-projection formulation and the same known exponent \cite[Conjecture~1.3 and the paragraph following Theorem~1.8]{GreenRuzsa2019}. Selecting one representative from each $\pi_{-1}$-fibre gives the following form.

\begin{proposition}[\cite{KatzTao2002}]\label{prop:kt-rational}
Let $\alpha\in(1,2)$ be the root of $\alpha^3-4\alpha+2=0$.
For every $\eta>0$, there exist a finite nonempty set $R_\eta\subset\Q\cup\{\infty\}$ with $-1\notin R_\eta$ and a constant $C_\eta\ge1$ such that, for every finite set $G\subset\Z^2$,
\[
 \card{\pi_{-1}(G)}\le C_\eta
 \left(\max_{r\in R_\eta}\card{\pi_r(G)}\right)^{\alpha+\eta}.
\]
\end{proposition}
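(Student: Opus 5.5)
The proof is a short reduction to the theorem of Katz and Tao, with no new analytic content. The plan has three steps: match the notation, pass to a subset on which $\pi_{-1}$ is injective, and note that projections of a subset are no larger than those of the whole set.

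\textbf{Matching the notation.} I will first restate the Katz--Tao statement $\mathrm{SD}(\alpha)$ \cite[Definition~3.1]{KatzTao2002} in the notation $\pi_r(x,y)=x+ry$, $\pi_\infty(x,y)=y$. As I read it, $\mathrm{SD}(\alpha)$ says the following. Let $G'\subset\Z^2$ be finite and suppose the map $(x,y)\mapsto x-y$, which is $\pi_{-1}$, is injective on $G'$. Then $\card{G'}$ is at most a constant times a power of the maximum of $\card{\pi_r(G')}$, with exponent $\alpha$ up to an arbitrary loss $\eta$. Here $r$ ranges over a fixed finite set of slopes: $0$, $\infty$, and finitely many further rationals such as $1$ or $1/2$ coming from the sum--difference combinatorics. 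I will check that the exponent proved in \cite[Section~4]{KatzTao2002} is exactly the root $\alpha\in(1,2)$ of $\alpha^3-4\alpha+2=0$, and that $-1$ is not among the slopes used, consistent with the formulation recorded by Green and Ruzsa \cite{GreenRuzsa2019}.

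If their slopes are written for $a+tb$ with real pairs $(a,b)$ rather than integer points, no change is needed, since integer points are a special case. If a slope is presented after an invertible rational linear change of coordinates that preserves the set of lines of slope $-1$, I will transport it back. This may require composing with a fixed rational scaling, which only changes cardinalities of projections by nothing at all, because projections are compared as sets. The resulting finite set of rationals and $\infty$ is $R_\eta$, and the implied constant is $C_\eta$.

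\textbf{Reducing to the injective case.} Let $G\subset\Z^2$ be finite and nonempty. For each value $s\in\pi_{-1}(G)$, pick one point of $G$ in the fibre $\pi_{-1}^{-1}(s)$, and let $G'$ be the set of chosen points. Then $\pi_{-1}$ is injective on $G'$ and $\card{G'}=\card{\pi_{-1}(G)}$. Since $G'\subseteq G$, we have $\card{\pi_r(G')}\le\card{\pi_r(G)}$ for every $r\in R_\eta$. Applying $\mathrm{SD}(\alpha+\eta)$ to $G'$ gives
\[
 \card{\pi_{-1}(G)}=\card{G'}\le C_\eta\Bigl(\max_{r\in R_\eta}\card{\pi_r(G')}\Bigr)^{\alpha+\eta}\le C_\eta\Bigl(\max_{r\in R_\eta}\card{\pi_r(G)}\Bigr)^{\alpha+\eta},
\]
which is the claimed inequality.

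\textbf{Expected difficulty.} The only real obstacle is bookkeeping in the notation-matching step. One has to verify that the Katz--Tao quantifiers are exactly as stated: a finite set of slopes for each $\eta$, a constant $C_\eta$, the maximum taken over the projections, and the stated exponent. This must be checked against \cite{KatzTao2002} and the summary in \cite{GreenRuzsa2019}, not re-derived.
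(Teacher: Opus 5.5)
Your proposal is correct and matches the paper's argument: the paper also cites the Katz--Tao statement $\mathrm{SD}(\alpha)$, which is formulated for sets on which $\pi_{-1}$ is injective, together with the rational-projection form recorded by Green and Ruzsa, and it reduces the general case by selecting one representative from each $\pi_{-1}$-fibre. Your explicit observation that passing to $G'\subseteq G$ can only shrink each $\card{\pi_r(G')}$ is exactly the monotonicity step the paper leaves implicit.
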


The following lemma converts the projection estimate into a lower bound for unions of arithmetic progressions.

\begin{lemma}\label{lem:progression-union}
For every $\eta>0$, there are $k_\eta\ge1$ and $c_\eta>0$ such that the following holds. If a finite set $A\subset\Z$ contains a $k_\eta$-term arithmetic progression of common difference $d$ for every $d$ in a finite set $\mathcal D$, then
\[
 \card A\ge c_\eta\card{\mathcal D}^{1/(\alpha+\eta)}.
\]
\end{lemma}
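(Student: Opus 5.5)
The plan is to encode the family of progressions as a finite set $G\subset\Z^2$ whose $\pi_{-1}$-projection has size about $\card{\mathcal D}$, and each of whose projections $\pi_r$, $r\in R_\eta$, has size $O_\eta(\card A)$. Proposition~\ref{prop:kt-rational} then gives $\card{\mathcal D}\ll_\eta \card A^{\alpha+\eta}$, which is the claim after inverting. The constants $k_\eta$ and $c_\eta$ will depend only on the finite set $R_\eta$, through the numerators and denominators of its elements.

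\emph{Setup.} Write each $r\in R_\eta\setminus\{\infty\}$ as $r=p_r/q_r$ with $q_r\ge1$ and $\gcd(p_r,q_r)=1$. Since $-1\notin R_\eta$, we have $p_r+q_r\neq0$. Let $M$ be the least common multiple of all the $\card{p_r+q_r}$ and all the $q_r$. Take $k_\eta$ to be a large multiple of $M\cdot\max_r(\card{p_r}+q_r)$. For each $d\in\mathcal D$, let $a_d+jd$, $0\le j<k_\eta$, be a progression contained in $A$.

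\emph{Construction of $G$.} For each $d$ I put one point
\[
 g_d:=(a_d+sd,\;a_d+td)
\]
into $G$, with fixed indices $0\le s,t<k_\eta$ and $t-s=-m$ for a fixed $m\ge1$. Then $\pi_{-1}(g_d)=md$, so $\pi_{-1}$ is injective on $G$ and $\card{\pi_{-1}(G)}=\card{\mathcal D}$. For $r=\infty$, $\pi_\infty(g_d)=a_d+td\in A$, so $\card{\pi_\infty(G)}\le\card A$. For finite $r=p/q$,
\[
 q\,\pi_r(g_d)=(p+q)a_d+(qs+pt)d .
\]
The aim is to choose $s,t$ (for instance $s=Mz_0+m$, $t=Mz_0$ with $M\mid m$) so that $qs+pt=(p+q)z_r$ for an integer $z_r\in[0,k_\eta)$, for every $r\in R_\eta$ at once. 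In that case $q\,\pi_r(g_d)=(p+q)(a_d+z_rd)\in(p+q)\cdot A$, so $\card{\pi_r(G)}\le\card A$.

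\emph{Main obstacle.} The step I expect to be hard is exactly this simultaneous integrality. We have $qs+pt=(p+q)t+qm$, so we need $(p+q)\mid qm$. Because $\gcd(p+q,q)=1$, this is the condition $(p+q)\mid m$, which holds when $M\mid m$. The remaining requirement is that $z_r=t+qm/(p+q)$ stays inside $[0,k_\eta)$. This should follow from taking $t$ in the middle of the progression and $k_\eta\gg M\max_r q_r$. If some choice of $r$ breaks this, my first fallback is to put into $G$ the points $g_d$ for several $(s,t)$ pairs at once. This only multiplies $\card{\pi_r(G)}$ by $O_\eta(1)$ and keeps $\pi_{-1}(G)\supseteq m\mathcal D$. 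Finally, I would apply Proposition~\ref{prop:kt-rational} to obtain $\card{\mathcal D}\le C_\eta\card A^{\alpha+\eta}$, and set $c_\eta:=C_\eta^{-1/(\alpha+\eta)}$.
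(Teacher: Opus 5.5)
Your proposal is correct and is essentially the paper's proof: taking $t=J$, $s=J+M$, $m=M$, your point $g_d=(a_d+sd,\,a_d+td)$ is exactly the paper's $\Phi(a_d,d)$, and your index $z_r=t+qm/(p+q)=t+m/(1+r)$ is the paper's $j_r=J+Ms_r$. Your integrality argument via $\gcd(p+q,q)=1$ and your range condition on $t$ and $k_\eta$ are both valid, so the fallback with several $(s,t)$ pairs is unnecessary.
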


\begin{proof}
Let $R_\eta$ and $C_\eta$ be supplied by Proposition~\ref{prop:kt-rational}. For finite $r\in R_\eta$, put $s_r=(1+r)^{-1}$; if $\infty\in R_\eta$, put $s_\infty=0$. Choose a positive integer $M$ such that $Ms_r\in\Z$ for every finite $r\in R_\eta$, and then choose $J\in\Z_{\ge0}$ so that $j_r:= J+Ms_r \ge 0$ for each $r\in R_\eta$.
Set
\[
 k_\eta:=1+\max_{r\in R_\eta}j_r
 \qquad {\rm and}\qquad
 c_\eta:=C_\eta^{-1/(\alpha+\eta)}.
\]

By assumption, for each $d\in\mathcal D$, there exists an integer $a_d$ such that $a_d+jd\in A$ for all $0\le j<k_\eta$.
Put $G :=\{(a_d,d):d\in\mathcal D\}$. Define
\[
 \Phi(x,y):=\bigl(x+(J+M)y,\,x+Jy\bigr).
\]
Then $\pi_{-1}(\Phi(a_d,d))=Md$, and hence $\card{\pi_{-1}(\Phi(G))} =\card{\mathcal D}$. For finite $r\in R_\eta$,
\[
 \pi_r(\Phi(a_d,d))
 =(1+r)(a_d+j_rd),
\]
whereas, for $r=\infty$,
\[
 \pi_\infty(\Phi(a_d,d))=a_d+Jd=a_d+j_\infty d.
\]
Since $0\le j_r<k_\eta$, we have $|\pi_{r}(\Phi(G))|\le \card A$ for each $r\in R_\eta$. Proposition~\ref{prop:kt-rational} therefore gives $\card{\mathcal D}\le C_\eta\card A^{\alpha+\eta}$, which is the desired estimate.
\end{proof}

\begin{proof}[Proof of Theorem~\ref{thm:katz-tao-upper}]
Fix $\eta>0$ and put $\theta :=\alpha+\eta$. Let $k:=k_\eta$ and $c:=c_\eta$ be given by Lemma~\ref{lem:progression-union}. Set
\[
 B=\left\lceil c^{-1}n^{1-1/\theta}\right\rceil\qquad {\rm and}\qquad
 H=Bkn.
\]
Fix $m\in\Z$ and partition $I(m,H)$ into $B$ consecutive blocks $I_1,\dots,I_B$ of length $kn$. Let $S$ be a nonempty subset of $\{1,\dots,n \}$. Note that each block $I_b$ contains $k$ consecutive multiples of $d$ for each $d\in S$, since $d\le n$. Thus, Lemma~\ref{lem:progression-union} gives $\card{\Gamma_{I_b}(S)}\ge c\card S^{1/\theta}$. It follows that
\[
 \card{\Gamma_{I(m,H)}(S)}
 \ge\sum_{b=1}^B\card{\Gamma_{I_b}(S)}
 \ge Bc\card S^{1/\theta}
 \ge n^{1-1/\theta}\card S^{1/\theta}
 \ge\card S.
\]
Hall's theorem implies $F(n)\le H\ll_\eta n^{2-1/\theta}$. Letting $\eta\to 0$ gives the desired statement.
\end{proof}

\section*{Statement on AI}

The authors used ChatGPT-5.6 Sol as an exploratory and proof-auditing tool during the development and refinement of the arguments. The authors have carefully checked all mathematical proofs and take responsibility for the manuscript.


\begin{thebibliography}{99}
\bibitem{Erdos1992}
P. Erd\H{o}s, \newblock Some of my forgotten problems in number theory, \newblock \emph{Hardy-Ramanujan J.} \textbf{15} (1992), 34--50.

\bibitem{ErdosPomerance1980}
P.~Erd\H{o}s and C.~Pomerance,
\newblock Matching the natural numbers up to $n$ with distinct multiples in another interval,
\newblock \emph{Indagationes Mathematicae} \textbf{42} (1980), no.~2, 147--161.

\bibitem{GilboaPinchasi2014} S.~Gilboa and R.~Pinchasi, \newblock On the union of arithmetic progressions, \newblock \emph{SIAM J. Discrete Math.} \textbf{28} (2014), no.~3, 1062--1073.

\bibitem{GreenRuzsa2019}
B.~Green and I.~Z. Ruzsa,
\newblock On the arithmetic Kakeya conjecture of Katz and Tao,
\newblock \emph{Periodica Mathematica Hungarica} \textbf{78} (2019), no.~2, 135--151.

\bibitem{Guy}
R.~K. Guy,
\newblock \emph{Unsolved Problems in Number Theory},
\newblock Springer, New York, 2004.

\bibitem{Hall1935}
P.~Hall,
\newblock On representatives of subsets,
\newblock \emph{Journal of the London Mathematical Society} s1-\textbf{10} (1935), no.~1, 26--30.

\bibitem{KatzTao2002}
N.~H. Katz and T.~Tao,
\newblock New bounds for Kakeya problems,
\newblock \emph{Journal d'Analyse Math\'ematique} \textbf{87} (2002), 231--263.

\bibitem{Kominers2026}
S.~D. Kominers,
\newblock Long intervals without distinct multiples of the first $n$ positive integers,
\newblock arXiv:2607.10431, 2026.

\bibitem{MontgomeryVaughan2007}
H.~L. Montgomery and R.~C. Vaughan,
\newblock \emph{Multiplicative Number Theory I: Classical Theory},
\newblock Cambridge Studies in Advanced Mathematics, vol.~97, Cambridge University Press, Cambridge, 2007.

\bibitem{Ruzsa1995}
I.~Z. Ruzsa,
\newblock Few multiples of many primes,
\newblock \emph{Studia Scientiarum Mathematicarum Hungarica} \textbf{30} (1995), no.~1--2, 123--125.

\bibitem{vanDoorn2026}
W.~van Doorn,
\newblock On the length of an interval that contains distinct multiples of the first $n$ positive integers,
\newblock \emph{Integers} \textbf{26} (2026), Article No.~A7.

\end{thebibliography}
\end{document}